\newtheorem{lemma}{Lemma}[section]
\newtheorem{thm}[lemma]{Theorem}
\newtheorem{prop}[lemma]{Proposition}
\newtheorem{cor}[lemma]{Corollary}
\newtheorem{prop_intro}{Proposition}
\newtheorem{quest_intro}[prop_intro]{Question}
\newtheorem{thm_intro}[prop_intro]{Theorem}
\theoremstyle{definition}
\newtheorem{rmk_intro}[prop_intro]{Remark}
\newtheorem*{prop*}{Proposition}
\newtheoremstyle{citing}
  {3pt}
  {3pt}
  {\itshape}
  {}
  {\bfseries}
  {}
  {.5em}
  {\thmnote{#3}}
\theoremstyle{citing}
\newcommand{\has}{{\scalebox{1.3}\#}}
\newcommand{\id}{\mathrm{id}}
\newcommand{\rel}{{\mathrm{rel}.\,}}
\newcommand{\R}{\ensuremath {\mathbb{R}}}
\newcommand{\csum}[2][n]{{#2_1 \,\# \,\ldots\,  \# \,#2_{#1} }}
\newcommand{\aut}{{\mathrm{Aut}}}
\renewcommand{\phi}{\varphi}
\newcommand\restr[2]{{
  \left.\kern-\nulldelimiterspace 
  #1 
  \vphantom{\big|} 
  \right|_{#2} 
  }}
\newcommand\rrestr[2]{{
  \left.\kern-\nulldelimiterspace 
  \left.\kern-\nulldelimiterspace 
  #1 
  \vphantom{\big|} 
  \right|\hspace{-2.4pt} 
  \right|_{#2} 
  }}
\begin{document}

\title[]{Homotopy Equivalences of 3-Manifolds}

\author[Federica Bertolotti]{Federica Bertolotti}
\address{Scuola Normale Superiore, Pisa, Italy}
\email{federica.bertolotti@sns.it}


\keywords{}
\begin{abstract}
    Let $M$ be an oriented closed $3$-manifold. We prove that there exists a constant $A_M$, depending only on the manifold $M$, such that for every self-homotopy equivalence $f$ of $M$ there is an integer $k$ such that $1 \leq k \leq A_M$ and $f^k$ is homotopic to a homeomorphism.
\end{abstract}

\maketitle

\section{Introduction}
In the last century a great effort has been devoted to understanding the relations between homotopy equivalences and homeomorphisms of closed manifolds and many results have been achieved;
however, the actual situation seems far from clear.
In dimension $2$ it is well known that every orientation preserving homotopy equivalence is homotopic to a homeomorphism;
on the other hand, in greater dimension the same statement can no longer be true and in order to prove something similar some hypotheses have to be added.
In this direction there exists a famous, almost as long-lasting, unproven conjecture, known as \emph{Borel Conjecture}, stating that every homotopy equivalence between oriented closed aspherical manifolds is homotopic to a homeomorphism.
Even if this conjecture is still open, much progress has been done, at least in dimension $n \neq 4$ (\cite{bartels_borel_2012},~\cite{lafont_special_2022}).

Given a topological space $X$, we denote by $\mathcal E (X)$ the group of homotopy classes of self-homotopy equivalences of $X$ and by $\mathcal H(X)$ the subgroup of $\mathcal E(X)$ containing the classes representable by a homeomorphism;
according to the Borel conjecture, if $M$ is a closed aspherical manifold, then $\mathcal E(M)= \mathcal H(M)$.

The space $\mathcal E(X)$ is an interesting object on its own and it has been studied for many kinds of spaces (see \cite{rutter_spaces_1997},~\cite{costoya_primer_2020} for a general introduction), such as $3$-manifolds (\cite{canary_homotopy_2004},~\cite{kreck_topological_2005}), odd dimensional spheres (\cite{smallen_group_1974},~\cite{kishimoto_monoids_2021}), products of spheres (\cite{baues_group_1996}, \cite{kreck_topological_2005} again), rational elliptic spaces (\cite{costoya_every_2014},~\cite{benkhalifa_group_2020}).

We are mainly interested in $3$-dimensional manifolds.
In this specific case the Borel conjecture was proved in 2005 by Kreck and L\"uck and, even more, in the same paper it is shown that if the fundamental group of an oriented closed connected $3$-manifold $M$ is torsion free, then again $\mathcal E(M)= \mathcal H(M)$ (\cite[Theorem 0.7]{kreck_topological_2005}).
It should be noted that every oriented closed prime $3$-manifold not homeomorphic to a quotient of the $3$-sphere $S^3$ has torsion free fundamental group.

On the other hand, there are examples of prime $3$-manifolds, all homeomorphic to (connected sums of) quotients of $S^3$, admitting homotopy equivalences that are not homotopic to any homeomorphism (\cite{mccullough_mappings_1986}).
However, if $M$ is a quotient of $S^3$, then the group $\mathcal E(M)$ is finite (see~\cite{smallen_group_1974} for quotients of the $3$-sphere or~\cite{kishimoto_monoids_2021} for a more general statement about quotients of spheres of odd dimensions);
thus, every homotopy equivalence $f \colon M \to M$ admits a power $f^k$ homotopic to a homeomorphism or, even more, the subgroup $\mathcal H(M)$ has finite index inside $\mathcal E(M)$.

It is then natural to ask whether a similar statement holds for every oriented closed $3$-manifold $M$.

\begin{quest_intro}
    Let $M$ be an oriented closed $3$-manifold $M$. Does the subgroup $\mathcal H(M)$ have finite index inside $\mathcal E(M)$?
\end{quest_intro}

In this direction we show that every homotopy equivalence $f \colon M \to M$ has a power $f^k$ homotopic to a homeomorphism and, moreover, the minimal integer $k>0$ satisfying this property is bounded by a constant $A_M$ depending only on the manifold $M$.

\begin{thm_intro}\label{main}
    Let $M$ be an oriented closed $3$-manifold $M$. Then, there exists a constant $A_M$ such that for every homotopy equivalence $f \colon M \to M$ there exists an integer $k$ satisfying $1\leq k \leq A_M$ and such that $f^k$ is homotopic to a homeomorphism.
\end{thm_intro}
Let $M$ be an oriented closed connected $3$-manifold. We describe here a constant $A_M$
satisfying the previous theorem.

If $M = S^3$, then it suffices to take $A_M = 1$.

If $M$ is not homeomorphic to the sphere $S^3$, then let $M = \csum M$ be a prime decomposition;
up to rearranging the indices, we can assume $M_1,\,\ldots \,,\,M_r$ have finite fundamental groups $\pi_1(M_1),\,\ldots \,,\,\pi_1(M_r)$, with cardinality $c_1,\,\ldots \,,\,c_r$ respectively, and $M_{r +1},\,\ldots \,,\,M_n$ have infinite fundamental groups;
then we can set
\[A_M = 2 \cdot r !\cdot (c_1 \cdot c_2\,\cdots \,c_r )!\,.\]

\begin{rmk_intro}[Simplifications on the hypotheses of Theorem \ref{main}]\label{hypsim}

    As the square of a homotopy equivalence is always orientation preserving, it is enough to prove Theorem \ref{main} with the further hypothesis on $f$ of being orientation preserving (we just need to remember to add an additional coefficient $2$ to the expression of $A_M$ at the end of the proof).

    Moreover, if $M$ has $q>1$ connected components, then there exists an integer $i$ such that $1 \leq i \leq q !$ and $f^i$ sends each connected component to itself.
    Thus, it suffices to prove the statement for oriented closed \emph{connected} manifolds.
\end{rmk_intro}

\subsection{Plan of the paper}
In Section \ref{preliminaries:sec} we give a very quick overview about decompositions of $3$-manifolds, distinguishing between two kinds of prime summands:
the \emph{elliptic} ones, consisting of quotients of the $3$-sphere $S^3$, and the \emph{nonelliptic} ones, that are all the other prime summands.
We also recall the definition of splitting homotopy equivalences and we state a result by Hendriks and Laudenbach asserting that orientation preserving homotopy equivalences always split along decomposing systems of spheres (\cite{hendriks_scindement_1974}).

In Section \ref{primesummand} we see how the results in \cite{kreck_topological_2005} and \cite{smallen_group_1974} imply that Theorem \ref{main} holds for every prime $3$-manifold.

In Section \ref{proof} we prove the main theorem:
first, given an orientation preserving self-homotopy equivalence $f \colon M \to M $ of an oriented closed connected $3$-manifold $M$, we show that there exists a sequence of powers $f^{\beta_k}$ of $f$ preserving some decomposing system of spheres $\Sigma$ and sending each piece of the decomposition induced by $\Sigma$ into a homeomorphic piece;
out of this sequence we extract a power of $f$ that is homotopic to a homeomorphism on each summand and, thus, it is itself homotopic to a homeomorphism.
\subsection{Acknowledgements}
I am grateful to my Ph.D. supervisor, Roberto Frigerio, for pointing out the problem and some references;  I would like to thank Matteo Migliorini and Francesco Milizia for important suggestions about Lemma \ref{GR} and for useful comments on previous versions of this paper. I would also like to thank Giuseppe Bargagnati, Pietro Capovilla and Domenico Marasco for useful discussions.

\section{Decomposition along spheres}\label{preliminaries:sec}

\subsection{Decompositions of $3$-manifolds}\label{dec}
We recall here basic facts about decompositions of $3$-manifolds in prime summands and we fix some notation.
More details can be found e.g.~in~\cite{martelli_introduction_2016}.

Let $M$ be an oriented closed connected $3$-manifold.
A \emph{separating sphere} in $M$ is an embedded $2$-sphere $S \subset M$ such that $M \setminus S$ consists of two connected components and a \emph{decomposing system of spheres} is a disjoint union of separating spheres in $M$.

If $\Sigma = S_1 \,\sqcup \, \ldots \, \sqcup S_{n -1} \subset M$ is a decomposing system of spheres, then $M \setminus \Sigma$ consists of exactly $n$ connected components:
indeed, every time a sphere is removed, one component is split into two.
Let us denote by $M_1',\,\ldots \,,\, M_n'$ the closure of these connected components; in this way
every $M_i'$ is a compact manifold whose boundary consists of a disjoint union of spheres, all contained in $\Sigma$;
we denote by $M_i$ the oriented closed connected manifold obtained from $M_i'$ by gluing a $3$-ball $B$ to every boundary component $S \in \{S_1,\,\ldots\,,\, S_{n -1} \}$ of $M_i'$ according to an orientation reversing identification $\partial B \cong S$.
According to this construction, we have
\[M = \csum M \,. \]

In this context we say that $M$ is \emph{decomposed along} $\Sigma$ or that the \emph{decomposition is induced} by $\Sigma$.

\subsubsection{Prime decomposition}
A closed oriented $3$-manifold $P$ not homeomorphic to $S^3$ is called \emph{prime} if every decomposition of $P$ is trivial (i.e.~if $P = M_1 \, \#\,M_2$, then $M_i = S^3$ for $i = 1$ or $i = 2$).
Given an oriented closed $3$-manifold $M$, we call \emph{prime decomposition} of $M$ a decomposition $M =M_1 \,\# \,\ldots \,\#\,M_n$ in which each summand $M_i$ is prime.

Recall that every oriented closed connected $3$-manifold $M$ not homeomorphic to $S^3$ admits a unique prime decomposition, where by unique we mean that whenever
\[
    M \,=\,
    M_{1}^{1}\,\#\,M_{2}^{1}\,\#\,\ldots \, \#\, M_{n}^{1} \,=\,
    M_{1}^{2} \,\#\,M_{2}^{2}\,\#\,\ldots \, \#\, M_{m}^{2}
\]
are two prime decompositions, then $m = n$ and there exists a permutation $\sigma \in \mathcal S_n$ such that $M_i^1$ is homeomorphic to $M^2_{\sigma(i)}$ for every $i \in \{1,\,\ldots \,,\,n \}\,$.

We distinguish between prime $3$-manifolds with finite fundamental group and those with infinite fundamental group:
the former are all obtained as quotients of $S^3$ and are usually referred as \emph{elliptic manifolds};
a prime $3$-manifold with infinite fundamental group is called \emph{nonelliptic prime manifold}: these spaces have noncompact universal cover (that is homeomorphic either to  $S^2 \times \R$ or to $\R^3$) and torsion free fundamental group.

\subsection{Connected sum of homotopy equivalences}\label{csum}
Together with manifolds, we also need to split homotopy equivalences along spheres;
however, to understand what a splitting is, we should first understand what it means to  sum maps.
We provide in the following the definition of connected sum of orientation preserving homotopy equivalences.

For $i= 1,\,2$, let us consider an orientation preserving homotopy equivalence $f_i \colon N_i \to M_i$ between two oriented closed connected manifolds $M_i,\,N_i$.
Up to homotopy, we can suppose there exist two closed balls $B_i \subset M_i,\,C_i\subset N_i$ such that $f_i$ maps homeomorphically (and preserving the orientation) the ball $C_i$ to the ball $B_i$.
Let $h_i$ be a homotopy inverse of $f_i$; since the degree of $f_i$ is $1$, we can choose $h_i$ in such a way that $h_i^{-1} (C_i)= B_i$ and it sends $B_i$ homeomorphically to $C_i$.
As in the previous subsection, let $M_i'$ be the closure of $M_i \setminus B_i$ and $N_i'$ the closure of $N_i \setminus C_i$ and fix an orientation reversing identification $\partial C_1 \cong \partial C_2$.

Since $\restr{f_1} {C_1}$ and $\restr{f_2} {C_2}$ are orientation preserving homeomorphism, there is a well defined orientation reversing identification $\partial B_1 \cong \partial B_2$ such that whenever two points $x_1 \in C_1,\,x_2 \in C_2$ are identified, then $f_1(x_1)\in B_1$ and $f_2(x_2) \in B_2$ are identified as well.
Let us define
\begin{align*}
    N &=\, N_1 \, \# \, N_2 \,=\, N_1'\, \bigcup_{\partial C_1 \cong \partial C_2} \,N_2'\,,      \\
    M &= M_1 \,\# \,M_2 = M_1'\, \bigcup_{\partial B_1 \cong \partial B_2} \,M_2'\,,
\end{align*}
which are oriented closed connected manifolds.

In this setting, the connected sum
\[ f_1 \# f_2 \colon N_1 \, \# \, N_2 \to M_1 \# M_2 \, \]
is defined by gluing $\restr{f_1} {N'_1}$ and $\restr{f_2} {N'_2}$ along  $\partial C_1 \cong \partial C_2$.
This map is a well defined orientation preserving homotopy equivalence:
indeed, from cellular approximation it follows that $h_1 \,\#\, h_2$ is a homotopy inverse of $f$.

In the following, whenever we write a connected sum $f_1 \,\# \, f_2$ between orientation preserving homotopy equivalences, we always have a (possibly implicit) choice of balls $B_1,\,B_2,\,C_1,\,C_2$ as above and an identification $\partial C_1 \cong \partial C_2$. On the other hand, the homotopy class of $f_1 \, \# \,f_2$ does not depend on these choices (this is a consequence of cellular approximation and the fact that all balls inside a connected $3$-manifold are isotopic).

If more than two orientation preserving homotopy equivalences are given, then the connected sum is defined by summing iteratively starting from the last two terms;
namely, if $f_i \colon N_i \to M_i$ are orientation preserving homotopy equivalences for $i \in \{1,\,\ldots \,,\,n \}$, then
\[
    f_1 \,\#\,\ldots \,\#\,f_n \,\coloneqq\,
    f_1 \,\#\left(f_2 \, \#\left(\,\ldots \,\#\left(f_{n -1}\#f_n \right)\cdots \right) \right)\,.
\]
As before, this is a well defined orientation preserving homotopy equivalence whose homotopy class does not depend on the choices done.

For $i \in \{1,\,\ldots \,,\,n \}$, let $g_i \colon N_i \to M_i$ be another orientation preserving homotopy equivalence and suppose that the connected sum $\csum g$ is well defined.
If $f_i$ and $g_i$ are homotopic for every $i \in \{1,\,\ldots \,,\,n \}$, then
the two maps $\csum g$ and $\csum f$ are homotopic as well.

Analogously, if the map $f_i$ is homotopic to a homeomorphism for every $i \in \{0,\,\ldots \,,\,n\}$, then the map $\csum{f}$ is homotopic to a homeomorphism.
\subsection{Splitting homotopy equivalences}
We are now ready to define the splitting of a homotopy equivalence and to state a result by Hendriks and Laudenbach assuring that every orientation preserving self-homotopy equivalence of an oriented closed connected $3$-manifold splits (up to homotopy) along any separating sphere.

Let $M,\,N$ be two oriented closed connected $3$-manifolds and let $\Sigma = S_1 \,\sqcup \, \ldots \, \sqcup S_{n -1} \subset  M$ be a decomposing system of spheres in $M$ inducing the decomposition $M = \csum M$. An orientation preserving homotopy equivalence $f \colon N \to M $ \emph{splits} along $\Sigma$ if
\begin{itemize}
    \item for every $i \in \{1,\,\ldots \,,\, n -1 \}$, the map $f$ is transverse to $S_i$ and $S_i'= f^{-1} (S_i)$ is an embedded $2$-sphere in $N$,
    \item $\Sigma'= S_1'\sqcup \,\ldots \,\sqcup S'_{n -1}$ is a decomposing system of spheres inducing a decomposition $N =\csum{N}$,
    \item up to rearranging the indices, for every $i \in \{1,\,\ldots \,,\, n  \}$ there exists an orientation preserving homotopy equivalence $f_i \colon N_i \to M_i$  such that $f =\csum{f}\,$, where the connected sum is taken along the decomposing system of spheres $\Sigma' $ (meaning that the connected sum is obtained by gluing the maps $f_1,\,\ldots \,,\, f_{n -1}$ along the spheres $S_1',\,\ldots \,,\,S_{n -1}'$ according the definition given in Subsection \ref{csum}).
\end{itemize}
    
In this case we say that $f$ sends the decomposition along $\Sigma'$ to the decomposition along $\Sigma$ and we denote by
\[\rrestr{f}{N_i}\colon N_i \to M_i \]
the map $f_i$. Let us highlight that $\rrestr{f} {N_i}$ is not a restriction of $f$ and $N_i$ is not a subset of $N$.

Whenever we have a splitting homotopy equivalence $f \colon N \to M$ sending a decomposition $N =\csum{N}$ to a decomposition $M =\csum{M}$, we always assume that the map $\rrestr{f} {N_i}$ goes from $N_i$ to $M_i$ (with the same index).

In addition, if $N = M$ and for every $i \in \{1 ,\,\ldots \,,\, n \}$ the summands $N_i$ and $M_i$ are homeomorphic, then we say that $f$ \emph{preserves the types of homeomorphism} of the decomposition induced by $\Sigma$;
notice that preserving the types of homeomorphism of a decomposition does not imply acting as a homeomorphism on every summand of the decomposition.

An orientation preserving homotopy equivalence $g \colon N \to M$ \emph{homotopically splits} along $\Sigma$ if it is homotopic to a homotopy equivalence $f \colon N \to M$ that splits along $\Sigma$.

It was proved by Hendriks and Laudenbach in ~\cite{hendriks_scindement_1974} that every orientation preserving homotopy equivalence between $3$-manifolds homotopically splits along an embedded nontrivial sphere.
\begin{thm}[\cite{hendriks_scindement_1974}]\label{HL}
    Let $M$ and $N$ be two oriented closed connected $3$-manifolds and let $S \subset M$ be an embedded separating sphere not homotopic to a point. Then every orientation preserving homotopy equivalence $f \colon N \to M$ homotopically splits along $S$.
\end{thm}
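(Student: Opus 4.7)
My plan is to start with transversality and then reduce the preimage $f^{-1}(S)$ to a single essential separating sphere by a sequence of homotopies of $f$. After a small perturbation $f$ is smooth and transverse to $S$, so $\Sigma' \coloneqq f^{-1}(S)$ is a closed embedded $2$-submanifold of $N$, that is, a disjoint union of closed orientable surfaces. The aim is to homotope $f$ so that $\Sigma'$ becomes a single essential $2$-sphere $S'$ whose two sides map under $f$ into the two sides of $S$; once that is achieved, capping off the sides with balls and extending $f$ by the cone over $f|_{S'}$ produces the desired splitting.

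The first simplification kills positive-genus components. If $F \subset \Sigma'$ has genus $\geq 1$, then $f|_F$ factors through the simply connected sphere $S$; composing with the isomorphism $f_\ast \colon \pi_1(N) \to \pi_1(M)$ shows that the inclusion-induced map $\pi_1(F) \to \pi_1(N)$ is trivial. By Dehn's lemma and the loop theorem of Papakyriakopoulos, $F$ then admits an embedded compressing disk $D \subset N$ with $D \cap \Sigma' = \partial D$ essential in $F$. A surgery of $F$ along $D$ can be realized by a homotopy of $f$ supported in a neighborhood of $D$, and it strictly decreases the total genus of $\Sigma'$. Iterating, one may assume every component of $\Sigma'$ is a $2$-sphere.

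The second simplification kills redundant spheres. A sphere component $S'' \subset \Sigma'$ that bounds a ball $B \subset N$ disjoint from the other components can be removed by pushing $f|_B$ across $S$; a pair of adjacent sphere components cobounding a region of $N$ that $f$ sends into one fixed side of $S$ can be merged by a similar homotopy. To decide when no further reduction is possible, I would pass to universal covers: $\widetilde{S} \subset \widetilde{M}$ is a locally finite system of separating spheres whose dual graph is a tree (the Bass--Serre tree of the splitting of $\pi_1(M)$ over the trivial group), and the proper lift $\widetilde{f}$ induces a $\pi_1$-equivariant map of dual trees, which must be an isomorphism since $f_\ast$ is. Because the target tree has a single orbit of edges, $\Sigma'$ downstairs is forced to have exactly one component, an essential separating sphere $S'$.

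With $S'$ in hand, the remaining work is routine: the two components $N_1', N_2'$ of $N \setminus S'$ are mapped by $f$ into $M_1', M_2'$ respectively, and after capping off and coning we obtain maps $f_i \colon N_i \to M_i$ with $f \simeq f_1 \# f_2$; each $f_i$ is a homotopy equivalence by a van Kampen and Mayer--Vietoris comparison with $f$. I expect the main obstacle to be the second simplification step: reducing the number of sphere components by genuine homotopies of $f$ (rather than mere ambient isotopies of $\Sigma'$) is delicate, and this is precisely where Laudenbach's innermost-disk and sphere-system technology is indispensable, because one must choreograph the homotopy of $f$ so as not to reintroduce higher-genus components while merging spheres.
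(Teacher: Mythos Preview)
The paper does not prove this theorem at all: it is quoted as a result of Hendriks and Laudenbach \cite{hendriks_scindement_1974} and used as a black box (and later the refinement \cite[Proposition~1.1]{hendriks_scindement_1974} is invoked inside the proof of Corollary~\ref{splitting:lemma}). So there is no in-paper argument to compare your sketch against.

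On the sketch itself: the genus-reduction step via the loop theorem is correct and standard. The sphere-reduction step, however, has a genuine gap. Your Bass--Serre claim that the $\pi_1$-equivariant map of dual trees ``must be an isomorphism since $f_\ast$ is'' is unjustified: an isomorphism of groups does not by itself force an isomorphism between the Bass--Serre tree of an \emph{arbitrary} sphere system $\Sigma'\subset N$ and that of the single sphere $S\subset M$; a priori $\Sigma'$ could realize a different (finer, or simply different) free-product decomposition of $\pi_1(N)\cong\pi_1(M)$. Relatedly, you have not argued that the individual components of $\Sigma'$ are separating in $N$, which is needed before the dual graph is even a tree in the way you want. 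You correctly flag at the end that merging sphere components by an honest homotopy of $f$ is exactly the delicate point handled by Hendriks--Laudenbach's techniques; as written, your text is an outline of where the difficulty lies rather than a proof that overcomes it.
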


In their paper, Hendriks and Laudenbach considered also the case in which $S$ is not separating inside $M$.

We now extend this result to decompositions with more than two summands.
\begin{cor}\label{splitting:lemma}
    Let $M$ and $N$ be two oriented closed connected $3$-manifolds and $\Sigma$ a decomposing system of spheres of $M$ inducing a decomposition $M =\csum M$ with no summands homeomorphic to $S^3$.
    Then every orientation preserving homotopy equivalence $f \colon N \to M$ homotopically splits along $\Sigma$.
\end{cor}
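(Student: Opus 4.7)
The proof proceeds by induction on the number $n$ of summands in the decomposition of $M$ induced by $\Sigma$. For $n=1$ the statement is vacuous (take $f$ itself, viewed as a one-term connected sum), and for $n=2$ it is exactly Theorem~\ref{HL}, once I verify that the single sphere $S \in \Sigma$ is not null-homotopic. This holds because a null-homotopic embedded sphere in a $3$-manifold bounds a $3$-ball, which would force one of the two summands to be homeomorphic to $S^3$, contradicting the hypothesis.

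For the inductive step, assume $n \ge 3$ and that the corollary holds for all decomposing systems with fewer summands. The plan is to peel off one summand at a time. Pick an \emph{outermost} sphere $S \in \Sigma$, meaning a sphere that, together with a suitable $3$-ball, cobounds a single prime summand (say $M_1$); such a sphere exists by a simple combinatorial argument on the dual tree of the configuration of the spheres of $\Sigma$. Since no $M_i$ equals $S^3$, the sphere $S$ is not null-homotopic, so Theorem~\ref{HL} allows us to replace $f$ by a homotopic map that splits along $S$, producing
\[
f \,\simeq\, f_1 \,\#\, g, \qquad f_1 \colon N_1 \to M_1, \qquad g \colon N' \to M',
\]
where $M' = M_2 \,\#\, \cdots \,\#\, M_n$ is obtained from the other side of $S$ by capping with a $3$-ball, and correspondingly $N = N_1 \,\#\, N'$.

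The remaining spheres $\Sigma \setminus \{S\}$ lie inside $M'$ (they are disjoint from $S$ and can be isotoped off the capping ball) and form a decomposing system for $M'$ realising the prime decomposition $M_2 \,\#\, \cdots \,\#\, M_n$, which has $n-1$ summands, none of which is $S^3$. By the inductive hypothesis applied to $g$, there is a homotopy from $g$ to a map $\tilde g$ that splits along $\Sigma \setminus \{S\}$; arranging this homotopy to fix a small collar of the capping ball, it extends to a homotopy of $f_1 \,\#\, g$ to $f_1 \,\#\, \tilde g$, and the latter splits along all of $\Sigma$ by the iterative definition of connected sum given in Subsection~\ref{csum}.

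The main technical point I expect to be delicate is the bookkeeping that ties together the splitting of $f$ along $S$ and the splitting of $g$ along $\Sigma \setminus \{S\}$ into a single splitting along the whole system $\Sigma$: one must check that transversality of the new map to every sphere $S_i \in \Sigma$ persists after composing the two homotopies, that the preimages remain embedded spheres assembling to a decomposing system in $N$, and that the iterated connected sum $f_1 \,\#\, (f_2 \,\#\, (\cdots \,\#\, f_n))$ is read off correctly from the two-step construction. Verifying the existence of an outermost sphere and handling the combinatorics of which summands end up on which side of $S$ is a secondary point, but straightforward once one views $\Sigma$ as a forest separating the prime summands.
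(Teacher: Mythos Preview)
Your proof is correct and follows essentially the same inductive strategy as the paper's: peel off one sphere at a time via Theorem~\ref{HL} and handle the rest by induction. The only difference is the order---you apply Theorem~\ref{HL} first and then the inductive hypothesis, whereas the paper does the reverse---and your choice of an \emph{outermost} sphere is a mild simplification that spares you some of the bookkeeping the paper carries out when spheres of $\Sigma_1$ happen to be adjacent to the piece $Y$ being split last.
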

\begin{proof}
    The proof is by induction on the number of spheres of $\Sigma$.
    
    If $\Sigma$ is empty, then there is nothing to prove; so, suppose
    \[\Sigma = S_1 \sqcup \ldots \sqcup S_n \]
    consists of $n>0$ spheres as in the statement and that the corollary is true for every decomposing system of spheres satisfying the hypotheses and with  less than $n$ spheres.
    Let
    \[M = \csum[n +1]{M}\]
    be the decomposition induced by $\Sigma$;
    without loss of generality we can assume  $S_n$ is a boundary component of both $M'_{n+1}$ and $M'_n$ (we are adopting here the same notation introduced in Subsection \ref{dec}) and set $Y = M_{n }\,\# \,M_{n +1}$.
    
    The decomposing system of spheres
    \[\Sigma_1= S_1 \,\sqcup \,\ldots \, \sqcup \, S_{n -1}\,,\]
    with $n -1$ components, induces the decomposition
    \[M = \csum[n -1]{M}\,\#\, Y \,,\]
    where, by construction, none of the summands is homeomorphic to $S^3$.
    By the inductive hypothesis the map $f$ homotopically splits along $\Sigma_1$: there exist a decomposition
    \begin{equation}\label{decN}
        N = N_1 \, \#\,\ldots \, \#\,N_{n -1}\, \# \, X \,
    \end{equation}
    and orientation preserving homotopy equivalences $f_i \colon N_i \to M_i$ (for $i \in \{1,\,\ldots \,,\, n -1 \}$) and $h \colon X \to Y$ such that the map $f$ is homotopic to
    \[ g_1 = \csum[n -1]{f} \,\#\, h \]
    and $\Sigma_1'=g_1^{-1}(\Sigma_1)= S_1'\,\sqcup \,\ldots \,\sqcup \, S_{n -1}'$ is a decomposing system of spheres inducing the decomposition (\ref{decN}).
    
    Since $Y = M_{n }\,\#\,M_{n +1}$ is a decomposition along the sphere $S_n$, Theorem \ref{HL} yields a splitting of the map $h \colon X \to Y$ along $S_n$:
    we get two $3$-manifolds $N_{n }$ and $N_{n +1} $ and two orientation preserving homotopy equivalences $f_{n }\colon N_{n} \to M_{n} $ and $f_{n +1} \colon N_{n +1} \to M_{n +1}$  such that $X = N_{n } \# N_{n +1}$, the map $h$ is homotopic to $h'=f_{n} \,\#\,f_{n +1}\,$, and $(h')^{-1} (S_n)= S_n'$ is an embedded separating sphere.
    
    Now we need to modify slightly the map $h$ and the sphere $S_n'$ in order to guarantee the well-definition of the connected sum $\csum[n +1] {f}$ along the spheres $S_1 ,\,\ldots \,,\,S_n$.

    Since $\Sigma_1'$ is a decomposing system of spheres, every sphere in $S_i'\subset X \cap \Sigma_1'$ bounds a ball in $X$; thus we can homotope $h$ in such a way that the sphere $S_n'= h^{-1} (S_n)$ does not intersect $\Sigma_1'$.


    Moreover, if $S_i \in \{S_1,\,\ldots \,,\,S_{n -1}\}$ is contained in $M_n$ or in $M_{n +1}$, (let us say in $M_n$), then we can suppose $f_n$ splits along $S_i$ (\cite[Proposition 1.1]{hendriks_scindement_1974}) so that $f_n^{-1} (S_i)$ is a sphere inside $N_n$.
    Thus, up to homotopy, we can suppose $\restr{h} {S_i'} =\restr{f_n} {S_i'}$.
    By repeating this process for all the spheres in $\Sigma_1\cap Y$, we obtain that the connected sum $\csum[n+1] f$ is well defined.

    Recollecting all the pieces together, one gets that:
    \begin{itemize}
            \item $f$ is homotopic to $\csum[n -1]{f}\,\# \,h$ that, in turn, is homotopic to
                \[g =\csum[n -1]{f}\,\#\,h'=\csum[n+1] f \,, \]
                where each $f_i \colon N_i \to M_i$ is an orientation preserving homotopy equivalence;
            \item the union
                \[\Sigma'= g^{-1} (\Sigma)= \Sigma'_1\,\sqcup \,S_n'= S'_1 \,\sqcup \,\ldots \, \sqcup \,S'_n \]
                is a decomposing system of spheres;
            \item the system $\Sigma'$ induces the decomposition
                \[N =\csum[n +1]{N}\,.\]
    \end{itemize}
    Therefore the map $f$ homotopically splits along $\Sigma$.
\end{proof}

\section{Orientation preserving self-homotopy equivalences of prime $3$-manifolds}\label{primesummand}
Although Theorem \ref{main} is not obvious in general, for prime manifolds it is an easy consequence of already existing results.
We recall here why this is true, distinguishing between the elliptic and nonelliptic case.

\subsection{Nonelliptic manifolds.}
An oriented closed connected manifold $M$ is called \emph{strong Borel manifold} if every orientation preserving homotopy equivalence $f \colon N \to M$ from an oriented closed connected manifold $N$ to $M$ is homotopic to a homeomorphism $g \colon N \to M$.

The following result is due to Kreck and L\"uck:
    \begin{thm}[\cite{kreck_topological_2005}]\label{sbm}
        If $M$ is an oriented closed connected $3$-manifold with torsion free fundamental group, then $M$ is a strong Borel manifold.
    \end{thm}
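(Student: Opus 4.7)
The plan is to reduce the strong Borel property from $M$ to its prime summands, verify it case by case, and reassemble via the splitting theorem. First I would invoke the Kneser--Milnor prime decomposition $M = \csum M$. Since $\pi_1(M) \cong \pi_1(M_1) * \cdots * \pi_1(M_n)$ is torsion free, the Kurosh subgroup theorem forces every $\pi_1(M_i)$ to be torsion free, so in particular no $M_i$ is homeomorphic to $S^3$. Given any orientation preserving homotopy equivalence $f \colon N \to M$, Corollary \ref{splitting:lemma} then produces a compatible decomposition $N = \csum N$ and orientation preserving homotopy equivalences $f_i \colon N_i \to M_i$ with $f \simeq \csum f$. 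By the observations in Subsection \ref{csum}, if every $f_i$ is homotopic to a homeomorphism then so is $\csum f$, so it suffices to prove the strong Borel property for each prime summand.

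Next I would classify prime $3$-manifolds with torsion-free fundamental group. By the sphere theorem, either $\pi_2(M_i) = 0$ or $M_i$ contains an essential embedded $2$-sphere; in the latter case primeness forces the sphere to be non-separating and hence $M_i \cong S^2 \times S^1$. When $\pi_2(M_i) = 0$ and $\pi_1(M_i) \neq 1$, the universal cover $\widetilde{M_i}$ is a simply connected non-compact $3$-manifold with $\pi_2 = 0$, so Hurewicz together with $H_3(\widetilde{M_i}) = 0$ (top homology of a non-compact $3$-manifold) gives $\pi_3 = 0$, and iterating shows that $\widetilde{M_i}$ is contractible; thus $M_i$ is aspherical. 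The problem therefore divides into two cases: $M_i \cong S^2 \times S^1$ and $M_i$ aspherical.

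The $S^2 \times S^1$ case is handled by noting that $\pi_1 \cong \Z$ together with the classification of oriented closed $3$-manifolds forces $N_i \cong S^2 \times S^1$, so one reduces to a direct inspection showing that every orientation preserving self-homotopy equivalence of $S^2 \times S^1$ is realized by a homeomorphism. The aspherical case is the Borel conjecture in dimension three, which follows from Perelman's geometrization theorem together with Waldhausen's rigidity theorem for Haken manifolds and Mostow rigidity for closed hyperbolic manifolds. The main obstacle — and the actual content of the Kreck--L\"uck paper — is to promote these rigidity statements, each of which produces a homeomorphism from a homotopy equivalence between \emph{specific} geometric manifolds, into the \emph{uniform} strong Borel statement valid for any oriented closed $3$-manifold $N$ in the domain; this is achieved via modified surgery theory combined with vanishing results for the Whitehead groups of torsion-free $3$-manifold groups, which together guarantee that the relevant structure sets are trivial and that $s$-cobordisms are products.
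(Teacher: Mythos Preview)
The paper does not prove this theorem at all: it is quoted verbatim from Kreck--L\"uck \cite{kreck_topological_2005} and used as a black box, so there is no ``paper's own proof'' to match. Your outline is therefore being compared against the actual Kreck--L\"uck argument, and at the structural level it is the right one: reduce to prime summands via a Hendriks--Laudenbach--type splitting, observe that torsion-freeness forces each prime piece to be either $S^2\times S^1$ or aspherical, and then invoke rigidity in each case.

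Where your sketch goes wrong is the final paragraph. Modified surgery theory, structure sets, and the $s$-cobordism theorem are the tools Kreck--L\"uck use in dimensions $\geq 5$; they are \emph{not} how the $3$-dimensional statement is established, and indeed surgery theory and $s$-cobordism do not operate in dimension $3$. The $3$-dimensional case in \cite{kreck_topological_2005} is purely geometric: once $N_i$ is known to be irreducible (because $\pi_2(N_i)\cong\pi_2(M_i)=0$ forces every embedded sphere to bound a ball, via the Poincar\'e conjecture), geometrization applies to $N_i$ as well, and one appeals directly to the topological rigidity of the geometric pieces (Waldhausen for Haken manifolds, Mostow--Prasad for hyperbolic ones, Scott and others for small Seifert fibered spaces), glued along the JSJ decomposition. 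There is no surgery-theoretic ``promotion'' step; the obstacle you describe is simply not present in dimension $3$. A minor additional point: the clause ``so in particular no $M_i$ is homeomorphic to $S^3$'' is misattributed --- that follows from the definition of prime decomposition, not from torsion-freeness, and is needed only to meet the hypothesis of Corollary~\ref{splitting:lemma}.
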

Every nonelliptic prime manifold has torsion free fundamental group and thus it is a strong Borel manifold.
Moreover, if $M = \csum{M}$ is a prime decomposition of an oriented closed connected $3$-manifold $M$, then the fundamental group of $M$ is given by
\[\pi_1(M)=\pi_1(M_1)\,*\,\ldots \, *\,\pi_1(M_n)\,.\]
Thus, $\pi_1(M)$ is torsion free if and only if for every $i \in \{1,\,\ldots \, ,\,n \}$ the group $\pi_1(M_i)$ is torsion free or, equivalently, the prime manifold $M_i$ is nonelliptic.

\subsection{Elliptic manifolds.}
As mentioned in the introduction, an elliptic $3$-manifold  is not necessarily a strong Borel manifold, but it is still true that orientation preserving self-homotopy equivalences defined on  it have a power homotopic to a homeomorphism.
Before of proving this result, let us introduce some notation.

Let $M$ be an oriented closed connected manifold (not necessarily of dimension $3$), $p\in M$ a base point and $f,g\colon (M,p) \to  (M,p)$ two self-homotopy equivalences of $M$ fixing the point $p$;
if there exists a homotopy between $f$ and $g$ fixing the point $p$, then we write $f \sim_{\rel p}g$ and we say that $f,g$ are homotopic relatively to $p$.
We denote by $\mathcal E(M,p)$ the group given by
\[
    \mathcal  E(M,p):=
    \{f\colon (M,p) \to (M,p) \text{ homotopy equivalence}\}/\sim_{\rel p}\,.
\]

Since every class $\phi\in\mathcal E(M,p)$ induces an automorphism
\[\phi_\#\colon \pi_1(M,p)\to \pi_1(M,p)\]
of the fundamental group $\pi_1(M,p)$ of $M$, we have a well defined homomorphism
\[\begin{matrix}
  \Theta_{M,\,p}  \colon &\mathcal E(M,p)&\to &\aut(\pi_1(M,p)) \,.\\
                  &\phi &\mapsto &\phi_\#
\end{matrix}\]
In general this map is neither injective nor surjective (\cite{rutter_spaces_1997}).

Smallen proved that whenever $M$ is an elliptic $3$-manifold, the map $\Theta_{M,\,p}$ is injective.
\begin{thm}[\cite{smallen_group_1974}]\label{sma}
    Let $G$ be a finite group that acts on $S^3$ without fixed points and let $p \in S^3/G$. Then $\Theta_{S^3/G,\,p}$ is injective.
\end{thm}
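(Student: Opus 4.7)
The plan is to lift to the universal cover $S^3 \to M$ and construct an equivariant homotopy via obstruction theory. Let $[f], [g] \in \mathcal{E}(M, p)$ satisfy $\Theta_{M,p}([f]) = \Theta_{M,p}([g])$, set $G = \pi_1(M,p)$, and write $\alpha := f_\# = g_\# \in \aut(G)$. After fixing a lift $\tilde p$ of $p$ in $S^3$, both $f$ and $g$ admit unique basepoint-preserving lifts $\tilde f, \tilde g \colon (S^3, \tilde p) \to (S^3, \tilde p)$, each $\alpha$-equivariant in the sense that $\tilde f(\gamma \cdot x) = \alpha(\gamma) \cdot \tilde f(x)$ for all $\gamma \in G$ and $x \in S^3$. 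If I can build a $G$-equivariant homotopy $\tilde H \colon S^3 \times [0,1] \to S^3$ from $\tilde f$ to $\tilde g$ that fixes $\tilde p$, then passing to the quotient will exhibit a homotopy rel $p$ between $f$ and $g$, which is exactly the required injectivity.

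To construct $\tilde H$, I would fix a $G$-equivariant CW-structure on $S^3$ (descending to one on $M$) and extend a partial equivariant homotopy skeleton-by-skeleton on the cylinder $S^3 \times [0,1]$. The obstruction to extending over a $G$-orbit of $(k+1)$-cells is a class in $H^{k+1}(M; \pi_k(S^3))$ with the induced local coefficients. Since $\pi_k(S^3) = 0$ for $1 \le k \le 2$, all low-dimensional obstructions vanish automatically, and only the top obstruction in $H^3(M; \pi_3(S^3))$ survives. Because every free finite group action on $S^3$ is orientation-preserving --- so $G$ acts trivially on $\pi_3(S^3) = \Z$ and $M$ is orientable --- this group simplifies to $H^3(M; \Z) \cong \Z$, and one can identify the obstruction explicitly with the integer $\deg(\tilde f) - \deg(\tilde g)$.

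The main obstacle is showing that this integer vanishes, i.e.\ that the degree of an $\alpha$-equivariant self-equivalence of $S^3$ fixing $\tilde p$ is determined by $\alpha$ alone. In the setting required for Theorem \ref{main}, the reduction of Remark \ref{hypsim} allows us to assume that both $f$ and $g$ are orientation-preserving, so their lifts automatically have degree $+1$ and the top obstruction is zero; the equivariant homotopy $\tilde H$ then exists and projects to the desired homotopy $f \sim_{\rel p} g$. For the orientation-unrestricted statement, ruling out an orientation-reversing map that happens to induce the same automorphism of $G$ as the identity is the genuinely delicate point of Smallen's argument and would need to be addressed using the fine structure of free finite isometric actions on $S^3$.
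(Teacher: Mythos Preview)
The paper does not supply its own proof of this statement; it is quoted from \cite{smallen_group_1974} and used as a black box, so there is no ``paper's proof'' to compare against. Your obstruction-theoretic approach via equivariant lifts to $S^3$ is the standard one, and your treatment of the orientation-preserving case is essentially correct: with $\pi_1(S^3)=\pi_2(S^3)=0$ the only obstruction to an $\alpha$-equivariant basepoint-preserving homotopy lies in $H^3(M;\pi_3(S^3))\cong\Z$, and it vanishes precisely when $\deg f=\deg g$. This orientation-preserving version is exactly what the paper needs, since in the proof of Theorem~\ref{connect} the maps $\tau_{s,k}$ to which the result is applied are orientation-preserving by construction.

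Your caution about the orientation-reversing case is, however, more than a technicality you could hope to patch: the statement as written is \emph{false} without an orientation hypothesis. Take $G=\Z/2$ acting antipodally, so that $M=\R P^3$. The reflection $(x_1,x_2,x_3,x_4)\mapsto(-x_1,x_2,x_3,x_4)$ of $S^3$ commutes with the antipodal map and therefore descends to an orientation-reversing self-homeomorphism $r$ of $M$. Since $\aut(\Z/2)$ is trivial, both $r$ and $\id_M$ lie in the kernel of $\Theta_{M,p}$, yet they are not homotopic because their degrees are $-1$ and $+1$. Hence no appeal to the ``fine structure of free finite isometric actions on $S^3$'' can rescue the unrestricted claim. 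What your obstruction computation actually establishes, and what suffices for every application in this paper, is that the pair $(f_\#,\deg f)\in\aut(G)\times\{\pm1\}$ determines the class of $f$ in $\mathcal{E}(M,p)$.
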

Let $M = S^3/G$.
Since the fundamental group $\pi_1(M,p) = G$ is a finite group, every element in $\aut(G)$ has order less then $c!\,$, where $c =|G|$ is the cardinality of $G$.
In particular, for every self-homotopy equivalence $f \colon (M,p) \to (M,p)$, there exists an integer $k$ satisfying $1 \leq k \leq c ! $ and $(f_\#)^k = \id_G$. Thanks to the result by Smallen, the map $f^k$ is homotopic to the identity (that is a homeomorphism).

In view of what just said, the next result is a direct consequence of the fact that every self-homotopy equivalence of a connected manifold is homotopic to a self-homotopy equivalence fixing a point.

\begin{prop}
    Let $M = S^3/G$ be an elliptic manifold and $f \colon M \to M$ a homotopy equivalence. Then there exists an integer $k$ such that $1 \leq k \leq |G|!$ and $f^k$ is homotopic to a homeomorphism.
\end{prop}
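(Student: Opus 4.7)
The plan is to reduce the statement to Smallen's Theorem \ref{sma} by moving to a based setting, and then use a purely group-theoretic bound on the order of automorphisms of a finite group.

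First I would pick any basepoint $p \in M$ and observe that, since $M$ is path-connected, every self-homotopy equivalence of $M$ is homotopic to one fixing $p$: given $f \colon M \to M$, choose a path $\gamma$ from $f(p)$ to $p$ and drag $f(p)$ along $\gamma$ by means of a homotopy supported in a small neighborhood (equivalently, use the homotopy extension property for the pair $(M,\{p\})$, which is a cofibration). Call the resulting basepoint-preserving map $f' \colon (M,p) \to (M,p)$; then $f \sim f'$ and consequently $f^k \sim (f')^k$ for every $k \geq 1$, so it is enough to find a suitable power of $f'$ homotopic to a homeomorphism.

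Next I would look at the image of the class $[f'] \in \mathcal E(M,p)$ under the homomorphism
\[\Theta_{M,p}\colon \mathcal E(M,p) \to \aut(\pi_1(M,p))= \aut(G).\]
The group $\aut(G)$ is finite and embeds in the symmetric group on the underlying set of $G$, hence $|\aut(G)|$ divides $|G|!$; in particular every element of $\aut(G)$ has order at most $|G|!$. Therefore there is an integer $k$ with $1 \leq k \leq |G|!$ such that $(f'_{\#})^k = \id_G$, i.e.\ $\Theta_{M,p}([f'^k]) = \Theta_{M,p}([\id_M])$. By Theorem \ref{sma}, the map $\Theta_{M,p}$ is injective, so $(f')^k \sim_{\rel p} \id_M$; in particular $(f')^k$ is homotopic (not necessarily relatively to $p$) to the identity, which is a homeomorphism. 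Combining with the first paragraph gives $f^k$ homotopic to a homeomorphism for the same $k$.

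I do not expect a serious obstacle: the only non-cosmetic step is the reduction to a basepoint-fixing representative, and this is standard once one remembers that $(M,p)$ has the homotopy extension property. The bound $|G|!$ is not optimal (the sharper bound is the order of $\aut(G)$, which itself divides $|G|!$), but it is the bound used in the statement and it follows immediately from the embedding $\aut(G)\hookrightarrow \mathcal S_{|G|}$.
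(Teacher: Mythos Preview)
Your proof is correct and follows essentially the same approach as the paper: reduce to a basepoint-preserving representative, use the bound on the order of elements in $\aut(G)$ coming from the embedding $\aut(G)\hookrightarrow \mathcal S_{|G|}$, and conclude via Smallen's injectivity result (Theorem \ref{sma}). The paper's own argument is terser but identical in substance.
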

It should be noted that in most of the cases  $|G|!$ is not the optimal bound. For example every automorphism of $G$ fixes the identity element $e_G \in G$ and acts as a permutation on the remaining $|G| -1$ elements.
Thus, a better (but still very loose) bound on the order of elements in $\aut(G)$ is $(|G| -1)!\,$.
However, we believe this number is still far from being optimal: for example, it does not take into account that many homotopy equivalences are homotopic to a homeomorphism.

\section{Proof of Theorem \ref{main}}\label{proof}
So far, we proved that for every prime $3$-manifold $M$ there exists a constant $A_M$ (depending only on the manifold $M$) with the property that for every orientation preserving homotopy equivalence $f \colon M \to M$ there exists a positive integer $k \in \{1,\,\ldots \,,\, A_M \}$ such that $f^k$ is homotopic to a homeomorphism.
The constant $A_M$ is $1$ when $M$ is nonelliptic and it may be chosen to be $c!$ when $M$ is elliptic and the fundamental group $\pi_1(M)$ is a finite group of cardinality $c$.

Moreover, we saw that for every oriented closed connected $3$-manifold $M$, every decomposing system of spheres $\Sigma \subset M$ and every orientation preserving homotopy equivalence $f \colon M \to M$, the map $f$ homotopically splits along $\Sigma$.

Now we need to put these results together.
One of the main problems is that the prime decomposition is unique only up to homeomorphism:
given two prime decompositions $M = \csum{M^1}$ along $\Sigma_1$ and $M =\csum{M^2}$ along $\Sigma_2$, there exists a permutation $\sigma \in \mathcal S_n$ so that $M^1_i$ is homeomorphic to $M^2_{\sigma(i)}$ (for every $1 \leq i \leq n$);
however, there is no way to identify the spheres in $\Sigma_1$ with the spheres in $\Sigma_2$ and there is not a canonical homeomorphism between the summands of the first decomposition and the ones of the second decomposition.
Moreover, even if $f \colon M \to M$ is an orientation preserving homotopy equivalence sending the decomposition induced by $\Sigma_2$ to the decomposition induced by $\Sigma_1$, there is no guarantees that $f$ preserves the types of homeomorphism of the decomposition along the system $\Sigma_1$.

With the following lemma we take care of this problem and find powers $f^{\beta_k}$ of $f$ homotopic to some orientation preserving homotopy equivalences $g_k$ that preserve the types of homeomorphism of some decompositions.
\begin{lemma}\label{lemmalungo}
    Let $M$ be an oriented closed connected $3$-manifold, $m$ a positive integer and $f \colon M \to M$ an orientation preserving homotopy equivalence.
    Then there exist $m +1$ decomposing systems of spheres $\Sigma_0,\,\ldots ,\,\Sigma_m$, $m$ positive integers $\beta_1,\,\ldots \,,\, \beta_m $ and $m$ orientation preserving self-homotopy equivalences $g_1,\,\ldots \,,\, g_m \colon M \to M$ of $M$ such that for every $k \in\{1,\,\ldots \,,\,m  \}$ the map $g_k$
    \begin{enumerate}
        \item\label{1}
            splits along $\Sigma_{k -1}$ and $g_k^{-1} ({\Sigma}_{k -1})= \Sigma_{k}$,
        \item\label{2}
            preserves the types of homeomorphism of $\Sigma_{k -1}$,
        \item\label{3}
            is homotopic to $f^{\beta_{k}}$.
    \end{enumerate}
    Moreover, the integers $\beta_k$ satisfy $\sum_{k = 1}^m \beta_k \leq m \cdot r !\,$, where $r$ is the number of elliptic manifolds appearing in a prime decomposition of $M$.

\end{lemma}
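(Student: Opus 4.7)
The plan is to build the three sequences inductively. Fix once and for all a prime decomposition $M = \csum M$ with $M_1,\ldots,M_r$ the elliptic summands and $M_{r+1},\ldots,M_n$ the nonelliptic ones (if $M = S^3$ the statement is vacuous), and let $\Sigma_0$ be a corresponding decomposing system of spheres. At the inductive step, suppose $\Sigma_{k-1}$ has been constructed and induces a prime decomposition $M = P_1\,\#\,\ldots\,\#\,P_n$; by uniqueness of the prime decomposition this is the same multiset of summands as the original, up to homeomorphism and relabeling. Corollary~\ref{splitting:lemma} lets me split every iterate $f^i$ along $\Sigma_{k-1}$: a representative $\tilde f^{(i)} \sim f^i$ decomposes as $\tilde f^{(i)} = f^{(i)}_1 \,\#\,\ldots\,\#\,f^{(i)}_n$ with each $f^{(i)}_j\colon N^{(i)}_j \to P_j$ an orientation preserving homotopy equivalence, and the preimage $(\tilde f^{(i)})^{-1}(\Sigma_{k-1})$ induces $M = N^{(i)}_1\,\#\,\ldots\,\#\,N^{(i)}_n$. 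For every nonelliptic index $j > r$, Theorem~\ref{sbm} gives $N^{(i)}_j \cong P_j$ automatically, so the obstruction to type preservation lives entirely in the elliptic summands.

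For each elliptic $j \leq r$, $N^{(i)}_j$ is a spherical space form with $\pi_1(N^{(i)}_j) \cong \pi_1(P_j)$, and by uniqueness of prime decomposition the multiset $\{[N^{(i)}_j]\}_{j \leq r}$ coincides with $\{[P_j]\}_{j \leq r}$. I can therefore record a permutation $\sigma_i \in S_r$, constrained to permute only within isomorphism classes of elliptic fundamental groups, by the rule $N^{(i)}_j \cong P_{\sigma_i(j)}$; type preservation of $\tilde f^{(i)}$ amounts exactly to the condition $\sigma_i = \mathrm{id}$. The heart of the argument is to exhibit some $1 \leq \beta_k \leq r!$ with $\sigma_{\beta_k} = \mathrm{id}$. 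Reading off $\sigma_i$ from the action of $(f^i)_\# = (f_\#)^i$ on the conjugacy classes of free factors of $\pi_1(M) = \pi_1(M_1)\ast\cdots\ast\pi_1(M_n)$ (via Grushko--Kurosh uniqueness), one obtains the multiplicativity $\sigma_i = \sigma_1^i$, so the order of $\sigma_1$ divides $|S_r| = r!$. Setting $\beta_k$ equal to that order, $g_k := \tilde f^{(\beta_k)}$, and $\Sigma_k := g_k^{-1}(\Sigma_{k-1})$, the three conditions in the statement hold, and summing over $k = 1,\ldots,m$ yields $\sum_{k=1}^m \beta_k \leq m \cdot r!$.

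The main obstacle is the justification of the multiplicativity $\sigma_i = \sigma_1^i$: the permutation $\sigma_1$ is defined through \emph{homeomorphism} types of source summands, while $f_\#$ acting on the free-product structure of $\pi_1(M)$ a priori remembers only the \emph{isomorphism} classes of the fundamental groups of the summands, and two elliptic prime summands with isomorphic finite fundamental group need not be homeomorphic (distinct homotopy-equivalent lens spaces being the standard example). To close this gap one argues that a homotopy inverse of $f$ splits along $\Sigma_{k-1}$ so as to produce the inverse of $\sigma_1$, so the assignment $i \mapsto \sigma_i$ realizes a free $\Z$-action on the finite set of admissible elliptic type-tuples, and the orbit of the identity tuple is purely periodic of length at most $r!$.
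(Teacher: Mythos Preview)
Your overall architecture matches the paper's: build a chain of systems $\Sigma_0,\Sigma_1,\ldots$ and homotopy equivalences sending one to the next, and arrange that each step preserves homeomorphism types. The gap is precisely where you flag it, and your proposed repair does not close it.

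The claim $\sigma_i=\sigma_1^i$ cannot be read off from the action of $(f_\#)^i$ on conjugacy classes of free factors of $\pi_1(M)$. That action gives a well-defined permutation $\rho\in S_r$ with $\rho^i$ controlling $f^i$, but $\rho$ only records which \emph{conjugacy class of finite free factor} goes where, i.e.\ which isomorphism type of $\pi_1(P_j)$ goes where. It does not see the homeomorphism type of the summand: for $M=L(7,1)\#L(7,2)$ the two free factors are abstractly isomorphic, and $\rho=\mathrm{id}$ is perfectly compatible with a splitting whose source tuple is $(L(7,2),L(7,1))$. So $\rho^{\beta}=\mathrm{id}$ does not force $\sigma_\beta=\mathrm{id}$. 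Your last paragraph tries to bypass this by declaring that $i\mapsto\sigma_i$ is a ``free $\Z$-action on a finite set''; a free $\Z$-action on a nonempty finite set does not exist, and in any case you have not shown that $\sigma_i$ is \emph{determined} by $i$ (different splittings of $f^i$ along $\Sigma_{k-1}$ could a priori produce different source tuples), which is exactly what ``$i\mapsto\sigma_i$ is a well-defined map'' would require.

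The paper avoids all of this with a pigeonhole argument that needs no multiplicativity and no canonicity of $\sigma_i$. One splits $f$ (not $f^i$) iteratively: $f_t\sim f$ splits along $\Sigma'_{t-1}$ with $\Sigma'_t=f_t^{-1}(\Sigma'_{t-1})$, producing prime decompositions $M=\#_s M_s^t$ for $t=0,\ldots,m\cdot r!$. For each $t$ one \emph{chooses} some $\sigma_t\in S_r$ with $M_s^t\cong M_{\sigma_t(s)}$; no claim is made that this choice is canonical or multiplicative. Since $|S_r|=r!$, among the $m\cdot r!+1$ values $\sigma_0,\ldots,\sigma_{m\cdot r!}$ some $\tau$ occurs at least $m+1$ times, at indices $\alpha_0<\cdots<\alpha_m$. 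Then $g_k=f_{\alpha_{k-1}+1}\circ\cdots\circ f_{\alpha_k}$ sends $M_s^{\alpha_k}$ to $M_s^{\alpha_{k-1}}$, and both are homeomorphic to $M_{\tau(s)}$, so $g_k$ preserves types; setting $\beta_k=\alpha_k-\alpha_{k-1}$ gives $\sum\beta_k=\alpha_m-\alpha_0\le m\cdot r!$. This replaces your unproven group-theoretic step with a counting step that is insensitive to the lens-space ambiguity.
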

Before proving this lemma, we should remark that if a homotopy equivalence $g$ sends the decomposing system of spheres $\Sigma_2$ to the system $\Sigma_1$, then we do not know whether the map $g$ (or some homotopic map) splits along $\Sigma_2$ or whether there exists a system $\Sigma_0$ on which $g$ (or some homotopic map) splits and such that $g^{-1} (\Sigma_0)=\Sigma_1$.
Thus, if the map $g$ preserves the types of homeomorphism of $\Sigma_1$ and it is homotopic to $f^\beta$ for some integer $\beta>0$, then we cannot directly obtain the desired result by taking the maps $g^2,\,g^3,\,\ldots \,$, as in general they are not ``compatible'' with the system $\Sigma_1$ nor with the system $\Sigma_2$.
Therefore, we need a more delicate (and technical) argument.

The statement is still true when $m =\infty$.

\begin{proof}
    When $M = S^3$ the statement is clear, so let us suppose $M$ is not homeomorphic to $S^3$ and fix a prime decomposition
    \[ M = M_1^0\, \#\, M_2^0 \,\#\, \ldots \,\#\, M_n^0\]
    induced by a decomposing system of spheres $\Sigma'_0 \subset M$.

    \emph{Step 1.}
    For every integer $k \geq 1$ we will construct by recursion a decomposing system of spheres $\Sigma'_k \subset M$, inducing a prime decomposition $M =\csum{M^k}$, and a homotopy equivalence $f_k \colon M \to M$, homotopic to $f$ and sending the decomposition along $\Sigma'_{k}$ to the decomposition along $\Sigma'_{k -1}$.

    Suppose we constructed them for every integer $k \in \{1,\,\ldots \,,\,t -1 \}\,$.
    By Corollary $\ref{splitting:lemma}$ there exists a homotopy equivalence $f_{t}$ homotopic to $f$ and splitting along $\Sigma'_{t -1}$; set $\Sigma'_{t}= f_{t}^{-1} (\Sigma'_{t -1})$, that is a decomposing system of spheres inducing a decomposition
    \[ M = M_1^{t} \,\#\, M_2^{t}\, \#\, \ldots \,\# \,M_n^{t}\,. \]
    As for every $s \in \{ 1,\,\ldots \,,\,n \}$ the map $\rrestr{f_{t}}{ M_s^{t}}\colon M_s^{t}\to M_s^{t -1}$ is a homotopy equivalence and $M_s^{t -1}$ is not homeomorphic to $S^3$ (by definition of prime decomposition), then $M_s^{t}$ is not homeomorphic to $S^3$ either;
    hence $\Sigma_{t }$ induces a prime decomposition.

    Let us set
    \[ f_{s,k}=\rrestr{f_k}{ M_{s}^{k}}\colon M_{s}^{k}\to M_s^{k -1}\,. \]
    As for $k \in \{1,\,\ldots \,,\,  t \}$ the map $f_k = \has_{s = 1}^{n} \left(f_{s,k}\right)$ is homotopic to $f$, the map $f^t$ is homotopic to
    \[
        f_1 \,\circ \, \ldots \, \circ \, f_t =
        \mathop{\has} \limits_{s = 1}^{n}\left(f_{s,1}\, \circ \,\ldots \, \circ \,  f_{s,t}\right)\,.
    \]

    \emph{Step 2.}
    We will find a sequence of integers $(\alpha_k )_{0 \leq k \leq m }$ for which $M_s^{\alpha_k}$ and $M_s^{\alpha_j} $ are homeomorphic for every $j,k \in\{0,\,\ldots \,,\,m \}$ and every $s \in \{1 ,\,\ldots \,,\, n \}$.

    Set $M_s:= M_{s}^{0}\,$;
    up to rearranging the indices, we can suppose $M_s$ is elliptic for $ s \in \{1,\,\ldots \,,\,r \}$ and nonelliptic for $ s \in \{r +1 ,\,\ldots \,,\, n \}\,$.

    Since nonelliptic prime manifolds are strong Borel manifolds, for every $k>0$ and $s \in \{r +1,\,\ldots \,,\, n \}$ the orientation preserving homotopy equivalence $f_{s,1} \,\circ \, \ldots \,\circ  \,f_{s,k}$ is homotopic to a homeomorphism and the summand $M_s^{k}$ is homeomorphic to $M_s$.

    On the other hand, because of the uniqueness of prime decompositions, for every integer $k \geq 0$ there exists a permutation $\sigma_k \in \mathcal S_{r}$ such that for every $s \in \{1,\,\ldots \,,\, r \}$ the prime summand $M_s^{k}$ is homeomorphic to $M_{\sigma_k(s)}$.
     Since the elements in $\mathcal S_r $ are exactly $r !\,$, the pigeonhole principle provides a permutation in $\mathcal S_r$ appearing at least $m +1$ times in $(\sigma_k )_{0 \leq k \leq m\cdot r !}\,$;
     let us denote by $\tau$ this permutation and let
    \[0 \leq \alpha_0<\alpha_1<\ldots<\alpha_m \leq m \cdot r ! \]
    be such that $\sigma_{\alpha_k}= \tau$ for every $k \in \{0 ,\,\ldots \,,\, m \}\,$.
    
    By construction, for every $s \in \{1,\,\ldots \,,\,r \}$ and $j,k \in \{0,\,\ldots \,,\, m \}$, the prime summand $M_s^{\alpha_j}$ is homeomorphic to $M_{\sigma_{\alpha_j} (s)}= M_{\tau (s)} =M_{\sigma_{\alpha_k} (s)}$, which, in turn, is homeomorphic to $M_s^{\alpha_k}.$
    
    \emph{Conclusion.}
    Let us take $\Sigma_k =\Sigma'_{\alpha_k}$ (for $0 \leq k \leq  m$), $\beta_k =\alpha_{k} - \alpha_{k - 1}$ (for $1 \leq k \leq m$) and
    \[
        g_k=
        f_{\alpha_{k -1}+1}\circ f_{\alpha_{k -1}+2}\circ\ldots \circ f_{\alpha_{k}}=
        \mathop{\has}\limits_{s = 1}^{n}\left(
            f_{s,\alpha_{k -1}+1}\circ f_{s,\alpha_{k -1}+2} \circ\ldots\circ f_{s,\alpha_{k}}
        \right),
    \]
    (for $1 \leq k \leq m$), where
    \[
        f_{s,\alpha_{k -1}+1}\circ f_{s,\alpha_{k -1}+2} \circ\ldots\circ f_{s,\alpha_{k}}
        \colon M_s^{\alpha_{k}}\to M_s^{\alpha_{k -1}}
    \]
    is between homeomorphic summands.

    Now, conditions (\ref{1}),(\ref{2}) and (\ref{3}) are all direct consequences of the construction; moreover,
    \[
        \sum_{k = 1}^m \beta_k =\sum_{k = 1}^m (\alpha_k -\alpha_{k -1})=\alpha_m -\alpha_0 \leq m \cdot r !\,.
    \]
\end{proof}

With the previous lemma we got a sequence of orientation preserving homotopy equivalences $(g_k \colon M \to M)_{1 \leq k \leq m}$ preserving the types of homeomorphism of some decomposing systems of spheres.
Thanks to this result, instead of considering the maps on the global manifold, we can study their action on the prime summands.
The following lemma is useful to understand how the maps $g_k$ act on the fundamental groups of the elliptic summands of the manifold $M$.

\begin{lemma}\label{GR}
    Let $G_1,\,\ldots ,\,G_r$ be finite groups of cardinality $c_1,\,\ldots ,\,c_r$ respectively, and set $m =  (c_1 \, \cdots \, c_r) ! \,$. Suppose that for any $s \in \{1,\,\ldots \,,\,r \}$ and $k \in \{1,\,\ldots \,,\,m \}$ an automorphism $\varphi_{s,k} \colon G_s \to G_s$ is given.  Then, there exist two integers $0 \leq n_1<n_2  \leq m$ such that the map
    \[\phi_{s,n_1 +1}\circ \phi_{s,n_{1}+{2}}\circ \,\ldots \, \circ \phi_{s,n_2}\colon G_s \to G_s \]
    is the identity $\id_{G_s}$ for every $s \in \{1,\,\ldots ,\,r \}$.
\end{lemma}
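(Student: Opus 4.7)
The plan is to reduce the statement to a single pigeonhole argument on a carefully chosen group.

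First, I would observe that any $r$-tuple of automorphisms $(\psi_1,\ldots,\psi_r) \in \aut(G_1)\times\cdots\times\aut(G_r)$ can be viewed as a single permutation of the finite set $X = G_1\times\cdots\times G_r$, acting coordinate-wise: $(x_1,\ldots,x_r)\mapsto(\psi_1(x_1),\ldots,\psi_r(x_r))$. This gives an injective group homomorphism
\[
\aut(G_1)\times\cdots\times\aut(G_r)\;\hookrightarrow\;\mathrm{Sym}(X).
\]
Since $|X| = c_1 c_2 \cdots c_r$, the codomain has order exactly $m = (c_1\cdots c_r)!$, so the product group has order at most $m$.

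Next, for each $k\in\{0,1,\ldots,m\}$, I would define the partial composition tuple
\[
\Phi_k \;=\; \bigl(\phi_{1,1}\circ\cdots\circ\phi_{1,k},\;\ldots,\;\phi_{r,1}\circ\cdots\circ\phi_{r,k}\bigr)\;\in\;\aut(G_1)\times\cdots\times\aut(G_r),
\]
with the convention that $\Phi_0$ is the tuple of identities. This produces $m+1$ elements sitting inside a group of order at most $m$, so by the pigeonhole principle there exist indices $0\le n_1<n_2\le m$ with $\Phi_{n_1}=\Phi_{n_2}$.

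Finally, for each $s\in\{1,\ldots,r\}$ the equality of the $s$-th components of $\Phi_{n_1}$ and $\Phi_{n_2}$ reads
\[
\phi_{s,1}\circ\cdots\circ\phi_{s,n_1} \;=\; \phi_{s,1}\circ\cdots\circ\phi_{s,n_1}\circ\phi_{s,n_1+1}\circ\cdots\circ\phi_{s,n_2},
\]
and since each $\phi_{s,j}$ is invertible, precomposing with $(\phi_{s,1}\circ\cdots\circ\phi_{s,n_1})^{-1}$ yields $\phi_{s,n_1+1}\circ\cdots\circ\phi_{s,n_2}=\id_{G_s}$, which is the desired conclusion. There is no real obstacle in this proof; the only conceptual step is to identify a natural finite group (of order exactly the prescribed bound $m$) in which the partial compositions live simultaneously for all $s$, and here the permutation group of the product set $G_1\times\cdots\times G_r$ is the correct choice.
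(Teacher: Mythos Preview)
Your proof is correct and follows essentially the same approach as the paper: form the product $G_1\times\cdots\times G_r$, note that the tuples $(\phi_{1,k},\ldots,\phi_{r,k})$ act as automorphisms of it (hence lie in a group of order at most $(c_1\cdots c_r)!$), and apply the pigeonhole principle to the $m+1$ partial products. The only cosmetic difference is that the paper first isolates the case $r=1$ and then reduces the general case to it, whereas you do both at once.
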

\begin{proof}
    We first prove the statement for $r = 1$: let $G$ be a finite group of cardinality $c$ and  $(\phi_k)_{1 \leq k \leq  c !}$ a (finite) sequence of automorphisms of $G$.
    Set
    \[\sigma_k = \phi_1 \circ \,\ldots \, \circ \phi_k \, \]
    for $k \in \{1,\,\ldots \,,\, c ! \}$ and $\sigma_0 =\id$.

    As $G$ has cardinality $c$, $\aut(G)$ has at most $c!$ elements.
    By the pigeonhole principle, there exist two integers $0\leq n_1 < n_2 \leq c !$ with the property that $\sigma_{n_1}= \sigma_{n_2}$. It follows
    \[ \phi_{n_1 +1}\circ \,...\,\circ\phi_{n_2} = \sigma_{n_1}^{-1}\circ\sigma_{n_2}= \id_G \,,\]
    as desired.
    
More generally, if $r \geq 1$, then set
    \[
        G = G_1 \times \,\ldots \,\times G_r,
    \]
    that is a finite group with $c= c_1 \,\cdots \,c_r$ elements, and consider for every $k \in \{1 ,\,\ldots \,,\, c ! \}$ the automorphism given by
    \[
        \phi_k = \left(\phi_{1,k},\,\ldots \,,\,\phi_{r,k}\right)\colon G \to G\,. \]
     The thesis is now a consequence of the case $r = 1$.
\end{proof}

We are ready to prove Theorem \ref{main}. As seen in Remark \ref{hypsim}, it is enough to prove it with the further assumptions that the manifold is connected and the homotopy equivalence is orientation preserving.

\begin{thm}\label{connect}
    Let $M = \csum M$ be a prime decomposition of an oriented closed connected $3$-manifold $M$ and suppose $M_1,\,\ldots \,,\,M_r$ have finite fundamental groups of cardinality $c_1,\,\ldots \,,\,c_r$ respectively, and $M_{r +1},\,\ldots \,,\, M_n$ have infinite fundamental groups.
    Set
    \[A_M = r !\cdot (c_1 \, \cdots \, c_r)!\,.
    \]
    Then for every orientation preserving homotopy equivalence $f \colon M \to M$ there exists an integer $k$ such that $1\leq k \leq A_M$ and $f^k$ is homotopic to a homeomorphism.
\end{thm}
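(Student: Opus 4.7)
The plan is to assemble Lemma \ref{lemmalungo}, Lemma \ref{GR}, Theorem \ref{sma} and Theorem \ref{sbm} into a single argument. First apply Lemma \ref{lemmalungo} with $m=(c_1\cdots c_r)!$ to produce decomposing systems of spheres $\Sigma_0,\ldots,\Sigma_m$, positive integers $\beta_1,\ldots,\beta_m$ with $\sum_{k=1}^m\beta_k\leq m\cdot r!$, and orientation preserving self-homotopy equivalences $g_1,\ldots,g_m$ of $M$, where each $g_k$ is homotopic to $f^{\beta_k}$, splits along $\Sigma_{k-1}$ with $g_k^{-1}(\Sigma_{k-1})=\Sigma_k$, and preserves the types of homeomorphism. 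Denote by $M_s^k$ the $s$-th summand of the decomposition induced by $\Sigma_k$ (with $M_s^0=M_s$ from the fixed prime decomposition), so that $g_k=\has_{s=1}^n g_{s,k}$ with each $g_{s,k}\colon M_s^k\to M_s^{k-1}$ an orientation preserving homotopy equivalence between homeomorphic prime summands.

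For nonelliptic indices $s\in\{r+1,\ldots,n\}$, the manifold $M_s$ is a strong Borel manifold by Theorem \ref{sbm}, so any composition of the $g_{s,k}$'s is automatically homotopic to a homeomorphism and no further work is required. The interesting work is for the elliptic indices. For every $s\in\{1,\ldots,r\}$ and every $k$ fix a homeomorphism $h_s^k\colon M_s^k\to M_s$ with $h_s^0=\id_{M_s}$, and define $\tilde g_{s,k}=h_s^{k-1}\circ g_{s,k}\circ(h_s^k)^{-1}\colon M_s\to M_s$. Up to homotopy one can assume that each $\tilde g_{s,k}$ fixes a chosen basepoint $p_s\in M_s$, and thus induces a well defined automorphism $\phi_{s,k}$ of $\pi_1(M_s,p_s)$.

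Now apply Lemma \ref{GR} to the $r$ finite sequences $(\phi_{s,k})_{1\leq k\leq m}$, whose target groups have cardinalities $c_1,\ldots,c_r$. This yields integers $0\leq n_1<n_2\leq m$ with $\phi_{s,n_1+1}\circ\ldots\circ\phi_{s,n_2}=\id_{\pi_1(M_s,p_s)}$ for every $s\in\{1,\ldots,r\}$ simultaneously. Theorem \ref{sma} then guarantees that $\tilde g_{s,n_1+1}\circ\ldots\circ\tilde g_{s,n_2}$ is homotopic rel $p_s$ to $\id_{M_s}$. Unwinding the conjugations, which telescope across the block $[n_1+1,n_2]$, shows that $g_{s,n_1+1}\circ\ldots\circ g_{s,n_2}\colon M_s^{n_2}\to M_s^{n_1}$ is homotopic to the homeomorphism $(h_s^{n_1})^{-1}\circ h_s^{n_2}$. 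Combined with the nonelliptic case, every restriction of $g_{n_1+1}\circ\ldots\circ g_{n_2}=\has_{s=1}^n(g_{s,n_1+1}\circ\ldots\circ g_{s,n_2})$ is homotopic to a homeomorphism, so by the discussion at the end of Subsection \ref{csum} the whole connected sum is homotopic to a homeomorphism. Since $g_{n_1+1}\circ\ldots\circ g_{n_2}\simeq f^{\beta_{n_1+1}+\ldots+\beta_{n_2}}$ and the exponent is a positive integer bounded by $\sum_{k=1}^m\beta_k\leq m\cdot r!=A_M$, this produces the required $k\in\{1,\ldots,A_M\}$.

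The main obstacle is the simultaneous pigeonholing across the elliptic summands: the maps $g_{s,k}$ go between distinct (although homeomorphic) prime summands $M_s^k$, so one has to fix the identifications $h_s^k$ and the basepoints $p_s$ carefully enough that the induced $\pi_1$-automorphisms really multiply to $\phi_{s,n_1+1}\circ\ldots\circ\phi_{s,n_2}$ across a contiguous block of indices. Once that bookkeeping is in place, Lemma \ref{GR} handles the pigeonholing for all $r$ elliptic summands at once and Smallen's theorem converts the algebraic equality into a topological homotopy.
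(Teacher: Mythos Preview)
Your proof is correct and follows essentially the same route as the paper: apply Lemma~\ref{lemmalungo} with $m=(c_1\cdots c_r)!$, conjugate the elliptic restrictions by homeomorphisms to basepoint-preserving self-maps of $M_s$, invoke Lemma~\ref{GR} and Smallen's Theorem~\ref{sma} on the resulting $\pi_1$-automorphisms to find a block $[n_1+1,n_2]$ on which each elliptic piece becomes homotopic to a homeomorphism, and handle the nonelliptic pieces via Theorem~\ref{sbm}. The only cosmetic differences are that the paper arranges basepoint-preservation by recursively adjusting the identifying homeomorphisms $\theta_{s,k}$ (rather than homotoping the conjugated maps afterward, as you do), and it explicitly relabels the summands of the $\Sigma_k$ rather than asserting $M_s^0=M_s$.
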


\begin{proof}
    For every $s \in \{1,\,\ldots \,,\, r \}$ let us fix a base point $p_s \in M_s$ and set $G_s =\pi_1(M_s,p_s)$. Set $m = (c_1 \,\cdots \,c_r) !\,$.
    
    By applying Lemma \ref{lemmalungo} to the manifold $M$, the map $f$, and the integer $m$, we find $m +1$ decomposing systems of spheres $\Sigma_0,\,\ldots ,\,\Sigma_m$, $m$ positive integers $\beta_1,\,\ldots \,,\, \beta_m $ and $m$ orientation preserving self-homotopy equivalences $g_1,\,\ldots \,,\, g_m$ of $M$ such that for every $k \in\{1,\,\ldots \, ,\,m  \}$ the map $g_k$
    \begin{itemize}
        \item
            splits along $\Sigma_{k -1}$ and $g_k^{-1} ({\Sigma}_{k -1})= \Sigma_{k}$,
        \item
            preserves the types of homeomorphism of $\Sigma_{k -1}$,
        \item
            is homotopic to $f^{\beta_{k}}$.
    \end{itemize}
    Moreover, it holds that $\sum_{k = 1}^m \beta_k \leq m \cdot r !\,$.
    
    Let
    \[M = \csum{M^k}\]
    be the decomposition induced by $\Sigma_k$;
    without loss of generality, we can assume that for every $s \in \{1,\,\ldots \,,\,n \}$ and $k \in \{0,\,\ldots \,,\, m \} $ the summand $M_{s}^{k}$ is homeomorphic to $M_s $.

    We will find two integers $0 \leq n_1<n_2 \leq m$ such that the homotopy equivalence
    \[
        h = g_{n_1 +1}\,\circ \,g_{n_1 +2}\, \circ \,\ldots \,\circ \,g_{n_2}
    \]
    preserves the types of homeomorphism of $\Sigma_{n_1}$ and for every $s \in  \{1,\,\ldots \,,\,n\}$ the map

    \[\rrestr{h} {M_{s}^{n_2}}\colon M_{s}^{n_2}\to M_{s}^{n_1}\]
    is a homeomorphism.

    For every $s \in  \{1,\,\ldots \,,\,r\}$ there exists a finite sequence of orientation preserving homeomorphisms
    \[
        \left(\theta_{s,k} \colon M_{s}\to M_{s}^{k}  \right)_{0 \leq k \leq m}
    \]
    such that the map $\theta_{s,k -1}^{-1}\circ \rrestr{g_k} {M_{s}^{k}} \circ \theta_{s,k}$ is an orientation preserving self-homotopy equivalence of $M_s$ fixing the point $p_s$:
    indeed, for every $k \in \{0,\,\ldots \,,\,m \}$ we can choose $\theta_{s,k}'\colon M_{s}\to M_{s}^{k}$ to be any orientation preserving homeomorphism (it exists by construction), set $\theta_{s,m}=\theta_{s,m} '$ and then, by recursion on $k$ (starting with $k = m$ and finishing with $k = 0$),  we define $\theta_{s,k}$ to be the orientation preserving homeomorphism obtained from $\theta_{s,k}'$ by precomposing an orientation preserving self-homeomorphism of $M_s$ that sends the point $p_s$ to the point
    $(\theta'_{s,k })^{-1} \left(\rrestr{g_{k +1}} {M_{s}^{k +1}}\left(\theta_{s,k +1} (p_s)\right)\right)$.

    Let us denote by $\tau_{s,k}$ the orientation preserving homotopy equivalence given by
    \[
        \tau_{s,k}:=
        \left(\theta_{s,k -1}^{-1}\circ \rrestr{g_k} {M_{s}^{k}} \circ \theta_{s,k}\right)
        \colon M_s \to M_s \,.
    \]
    Since for every $s \in \{1,\,\ldots \,,\,r \}$ and $k \in \{0 ,\,\ldots \,,\, m \}$ the map $\tau_{s,k}$ fixes the point $p_s \in M_s$, it induces a well defined map
    \[\phi_{s,k}=(\tau_{s,k})_{\#}\colon G_s \to G_s \, \]
    on the fundamental group $\pi_1(M_s,p_s)= G_s$.
    As $\tau_{s,k}$ is a homotopy equivalence, the map $\phi_{s,k}$ is an automorphism of $G_s$.

    Now, Lemma  \ref{GR} applied to the finite groups $G_1,\,\ldots \,,\,G_r$ and the automorphisms $\phi_{s,k}\colon G_s \to G_s$ yields two integers $0 \leq n_1 < n_2 \leq m$
    such that  the map
    \[\phi_{s,n_1 +1}\circ \phi_{s,n_{1}+{2}}\circ \,\ldots \, \circ \phi_{s,n_2}\colon G_s \to G_s \]
    is the identity on $G_s$ for every $s \in \{1 ,\,\ldots \,,\, r \}$; since $M_s$ is elliptic, Theorem \ref{sma} implies the homotopy equivalence
    
    \[
        \tau_{s,n_1 +1}\circ \,\ldots \, \circ \tau_{s,n_2} =
        \theta_{s,n_1}^{-1}\circ \left(
            \rrestr{g_{n_1 +1}} {M_{s}^{n_1 +1}}\circ \ldots \circ \rrestr{g_{n_2}} {M_{s}^{n_2}}
        \right)\circ \,\theta_{s,n_2}
        \colon M_s \to M_s
    \]
    is homotopic to the identity of $M_s$.

    It follows that for every $s \in  \{1,\,\ldots \,,\,n\}$ the map
    \[
        \rrestr{g_{n_1 +1}} {M_{s}^{n_1 +1}}\circ \,\ldots \,\circ \rrestr{g_{n_2}} {M_{s}^{n_2}}\colon M_{s}^{n_2}\to M_{s}^{n_1}
    \]
    is homotopic to a homeomorphism $h_s$ (when $1 \leq s \leq r$, then we take $h_s = \theta_{s,n_1}\circ \theta_{s,n_2}^{-1}$ and when $r< s \leq n$, the existence of $h_s$ is ensured by the fact that $M_s$ is a strong Borel manifold).

    Let us set $t =\sum_{k = n_1 +1}^{n_2}\beta_k$;
    the map $f^{t} = f^{\beta_{n_1 +1}} \circ \,\ldots \,\circ f^{\beta_{n_2}}$ is homotopic to
    \[
        g_{n_1 +1} \circ \,\ldots \,\circ g_{n_2}=
        \mathop{\has}_{s=1}^n\left(
            \rrestr{g_{n_1 +1}} {M_{s}^{n_1 +1}}\circ \,\ldots \,\circ \rrestr{g_{n_2}} {M_{s}^{n_2}}
        \right)\,,
    \]
    which, in turn, is homotopic to the map $h=\mathop{\has}_{s=1}^n h_s$. Being a connected sum of homeomorphisms, the map $h$ is a homeomorphism as well. Moreover, by construction it holds that
    \[
    t = \sum_{k = n_1 +1}^{n_2}\beta_k \leq \sum_{k = 1}^m \beta_k \leq m \cdot r ! = r !\cdot (c_1 \,\cdots \,c_r)!=A_M \,.
   \]

\end{proof}

 \bibliography{biblio}
\bibliographystyle{alpha}

\end{document}